\newtheorem{thm}{Theorem}
\newtheorem{lem}[thm]{Lemma}
\newtheorem{cor}[thm]{Corollary}
\numberwithin{equation}{section}
\numberwithin{thm}{section}
\newcommand{\intg}{\mathbb Z}
\newcolumntype{P}[1]{>{\centering\arraybackslash}p{#1}}
\newcommand{\legendre}[2]{\genfrac{(}{)}{}{}{#1}{#2}}
\newcommand{\sgn}{\mathrm{sgn}}
\title[Rotation Symmetries of Sequential Matrices]{Rotation Symmetries of Sequential Matrices with Applications to the Jacobi Symbol}
\author{Yemeen Ayub \and Charles L. Samuels}
\address{George Mason University, Department of Mathematical Sciences, 4400 University Drive, Fairfax, VA 22030}
\email{yayub@masonlive.gmu.edu}
\address{Christopher Newport University, Department of Mathematics, 1 Avenue of the Arts, Newport News, VA 23606}
\email{charles.samuels@cnu.edu}
\subjclass[2010]{11A05, 11A15 (Primary); 20B30 (Secondary)}
\keywords{Sequential Matrices, Completely Multiplicative Functions, Quadratic Reciprocity, Legendre Symbol, Jacobi Symbol}
\begin{document}

\begin{abstract}
	Suppose that $p$ is an odd prime and $\legendre{\cdot}{p}$ denotes the Legendre symbol modulo $p$.  If $p$ is has the form $p= n^2+1$ then one easily verifies that $\legendre{a}{p} = \legendre{-a}{p}$ for all 
	$a\in \intg/p\intg$. We identify various symmetry properties of sequential matrices over $\intg/(n^2+1)\intg$ regardless of whether $n^2+1$ is prime.   We deduce from these results a collection of symmetries involving
	Jacobi symbol modulo $n^2+1$ which generalize our above observation on the Legendre symbol.
\end{abstract}

\maketitle

\section{Introduction}

Suppose that $n$ is a positive integer and set $m = n^2 + 1$. An $n\times n$ matrix $A = (a_{i,j})$ with entries in $\intg/m\intg$ is called the {\it $n\times n$ sequential matrix} if
\begin{equation*}
	a_{i,j} = j + (i-1)n 
\end{equation*}
for all $1\leq i,j\leq n$.  The definition of sequential matrices may be restated recursively.  Indeed, we set $a_{1,1} = 1$, and for all other pairs $(i,j)$ we define
\begin{equation*}
	a_{i,j} = \begin{cases} a_{i,j-1} + 1 & \mbox{if } j > 1 \\ a_{i-1,n} + 1 & \mbox{if } j = 1.\end{cases}
\end{equation*}
This creates a matrix having a $1$ as the top left entry, and then counts consecutive integers moving rightward.  When we reach the end of a row, we move the next row and continue counting.
Some examples of sequential matrices are as follows:
\begin{equation*}
	\begin{pmatrix} 1 & 2 \\ 3& 4 \end{pmatrix},\quad \begin{pmatrix} 1 & 2 & 3 \\ 4& 5 & 6 \\ 7 & 8 & 9 \end{pmatrix}, 
		\quad \begin{pmatrix} 1 & 2 & 3 & 4 \\ 5& 6 & 7 & 8 \\ 9 & 10 & 11 & 12 \\ 13 & 14 & 15 & 16\end{pmatrix}.
\end{equation*}
In general, we will write $Q_n$ to denote the $n\times n$ sequential matrix.  Now suppose that $\varphi: \intg/m\intg\to \{0,1,-1\}$ is a completely multiplicative function and define 
$\varphi(A) = (\varphi(a_{i,j}))$.  The goal of this article is to study the symmetry properties of $\varphi(Q_n)$.

In order to make our agenda more precise, it is convenient to briefly describe the action of the dihedral group $D_4$ on the set of all $n\times n$ matrices with entries belonging to a given set $\Gamma$.   
For an arbitrary matrix $A = (a_{i,j})$, where $a_{i,j}\in \Gamma$, we define 
\begin{equation} \label{Generators}
	\tau(A) = (a_{j,i})\quad\mbox{and}\quad \rho(A) = (a_{j,n-i+1}).
\end{equation}
Clearly $\tau(A) = A^T$ so that $\tau$ flips $A$ across its main diagonal.  Although it is less obvious from the definition, a brief examination reveals that $\rho$ rotates $A$ clockwise by $90^\circ$.  

By applying $\tau$ and $\rho$ to the sequential matrix $Q_n$, we obtain generators of a subgroup of the symmetric group $S_{m}$ which is isomorphic to the dihedral group $D_4$ (Here, we are still assuming that
$m = n^2+1$).  
For the purposes of our discussion, we shall simply write $D_4(n)$ to denote this subgroup.  The definitions \eqref{Generators} ensure that $D_4(n)$ acts on the set of $n\times n$ matrices with entries in $\Gamma$.   
If $\Gamma$ is a commutative ring with unity and $J$ denotes the matrix having $1$'s along the off diagonal and $0$'s elsewhere, then all elements of $D_4(n)$ are determined according to the following formulas:

\begin{table}[H]
	\def\arraystretch{1.3}
	\begin{tabular}{c|c|c}
		Element $\sigma\in D_4(n)$ & Value of $\sigma(A)$ & Description of $\sigma(A)$ \\ \hline \hline
		$1$ & $A$ & Identity Map \\ \hline
		$\rho$ & $A^TJ$ & $90^\circ$ clockwise rotation \\ \hline
		$\rho^2$ & $JAJ$ & $180^\circ$ clockwise rotation \\ \hline
		$\rho^3$ & $JA^T$& $270^\circ$ clockwise rotation \\ \hline
		$\tau$ & $A^T$ & Flip across the main diagonal \\ \hline
		$\tau\rho$ & $JA$ & Flip across the horizontal center line \\ \hline
		$\tau\rho^2$ & $JA^TJ$ & Flip across the off diagonal \\ \hline
		$\tau\rho^3$ & $AJ$ & Flip across the vertical center line
	\end{tabular}
\end{table}

A matrix $A$ which is fixed by some element of $D_4(n)$ is often considered to have a symmetry property.  For example, in classical linear algebra, students are taught that $A$ is called {\it symmetric}
if $\tau(A) = A$.  Additionally, $A$ is called {\it centro-symmetric} if $\rho^2(A) = A$ and {\it Hankel-symmetric} if $\tau\rho^2(A) = A$.  Such symmetries have been studied extensively in various contexts
(see \cite{BrualdiMa, YSK, AbuJeib,AbuJeib2}, for example).  In the case of an $n\times n$ sequential matrix, our first result asserts that the rotation maps can be described by some simple arithmetic in $\intg/m\intg$.

\begin{thm} \label{RotoSymmetry}
	If $n$ is a positive integer and $m = n^2+1$ then $\rho(Q_n) = nQ_n$.
\end{thm}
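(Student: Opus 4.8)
The plan is to verify the asserted identity entrywise, exploiting the closed form $a_{i,j} = j + (i-1)n$ together with the single congruence $n^2 \equiv -1 \pmod{m}$ that makes $m = n^2+1$ special. First I would fix $1 \le i,j \le n$ and, using the definition $\rho(A) = (a_{j,n-i+1})$, record that the $(i,j)$ entry of $\rho(Q_n)$ equals $a_{j,n-i+1}$. Substituting into the closed form gives $a_{j,n-i+1} = (n-i+1) + (j-1)n$, which simplifies to $jn - i + 1$.

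Next I would compute the $(i,j)$ entry of $nQ_n$, namely $n\cdot a_{i,j} = n(j + (i-1)n) = nj + (i-1)n^2$. At this stage the two expressions do not agree as integers, so the substance of the argument lies in the reduction modulo $m$. Here the key observation is that $n^2 \equiv -1 \pmod{m}$; applying it to the quadratic term yields $(i-1)n^2 \equiv -(i-1) = 1 - i \pmod{m}$, so that the $(i,j)$ entry of $nQ_n$ reduces to $nj + 1 - i$, which is exactly the entry $jn - i + 1$ already computed for $\rho(Q_n)$. Since $i$ and $j$ were arbitrary, the two matrices have identical entries in $\intg/m\intg$, giving $\rho(Q_n) = nQ_n$.

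I expect the only real obstacle to be bookkeeping: keeping the index substitution $n-i+1$ straight when unwinding the definition of $\rho$, and carefully tracking the constant and linear contributions through the reduction. There is no conceptual difficulty beyond recognizing that the congruence $n^2 \equiv -1 \pmod{m}$ is precisely what collapses the quadratic term $(i-1)n^2$ into the linear term $1 - i$ appearing in the rotated matrix, so the identity holds for this particular modulus and would fail for a generic $m$.
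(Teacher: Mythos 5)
Your proposal is correct and matches the paper's argument essentially step for step: both compute the $(i,j)$ entry of $\rho(Q_n)$ as $a_{j,n-i+1} = jn - i + 1$ via the closed form, and both use the single congruence $n^2 \equiv -1 \pmod{m}$ to identify this with $n\,a_{i,j} = nj + (i-1)n^2$. The only cosmetic difference is direction: the paper rewrites $jn + (i-1)(-1)$ as $jn + (i-1)n^2 = n\,a_{i,j}$, while you reduce $n\,a_{i,j}$ down to $jn + 1 - i$; these are the same calculation.
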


Certainly we have that $n^2 \equiv -1 \mod m$.  In addition, multiplication by $n$ commutes with all elements of $D_4(n)$, and hence, we obtain the following table of values.

\begin{table}[H]
	%\caption{}%Values of $\sigma(A)$ for sequential matrices $A$}
	%\label{table:SequentialValues}
	\def\arraystretch{1.3}
	\begin{tabular}{c|c}
		Element $\sigma\in D_4(n)$ & Value of $\sigma(Q_n)$ in terms of $n$ \\ \hline\hline
		$1$ & $Q_n$ \\ \hline
		$\rho$ & $nQ_n$  \\ \hline
		$\rho^2$ & $-Q_n$  \\ \hline
		$\rho^3$ & $-nQ_n$ \\ \hline
		$\tau$ & $Q_n^T$ \\ \hline
		$\tau\rho$ & $nQ_n^T$  \\ \hline
		$\tau\rho^2$ & $-Q_n^T$  \\ \hline
		$\tau\rho^3$ & $-nQ_n^T$ 
	\end{tabular}
\end{table}

As we noted above, our plan is to study the symmetry properties of $\varphi(Q_n)$, where $\varphi: \intg/m\intg\to \{0,1,-1\}$ is a completely multiplicative function.  Thanks to Theorem \ref{RotoSymmetry},
we obtain a corollary which addresses this issue in relation to the rotation maps.

\begin{cor} \label{MultiplicativeRotoSymmetry}
	Suppose that $n$ is a positive integer and $m = n^2+1$.  If $\varphi: \intg/m\intg\to \{0,1,-1\}$ is a completely multiplicative function then $\varphi(\rho(Q_n)) = \varphi(n)\varphi(Q_n)$.
\end{cor}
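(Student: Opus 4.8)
The plan is to combine Theorem \ref{RotoSymmetry} with the entrywise definition of $\varphi(A)$ and the complete multiplicativity of $\varphi$; the argument is essentially a one-line reduction once these ingredients are assembled in the right order.

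First I would invoke Theorem \ref{RotoSymmetry} to replace $\rho(Q_n)$ by $nQ_n$. Since $n$ enters here as a scalar from $\intg/m\intg$, the product $nQ_n$ is computed entrywise, so if $Q_n = (a_{i,j})$ then the $(i,j)$ entry of $\rho(Q_n)$ is $n a_{i,j}$, taken in $\intg/m\intg$. I would record this explicitly so that the later appeal to multiplicativity is applied to genuine elements of $\intg/m\intg$ rather than to integer representatives.

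Next I would unwind the definition $\varphi(A) = (\varphi(a_{i,j}))$, which says that $\varphi$ acts on matrices by applying $\varphi$ to each entry. Consequently the $(i,j)$ entry of $\varphi(\rho(Q_n))$ is $\varphi(n a_{i,j})$. Because $\varphi\colon \intg/m\intg \to \{0,1,-1\}$ is completely multiplicative, we have $\varphi(n a_{i,j}) = \varphi(n)\varphi(a_{i,j})$ for every pair $(i,j)$. The right-hand side is precisely the $(i,j)$ entry of the scalar multiple $\varphi(n)\varphi(Q_n)$, where $\varphi(n) \in \{0,1,-1\}$ multiplies the matrix $\varphi(Q_n) = (\varphi(a_{i,j}))$ entrywise. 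Since the two matrices agree in every entry, they are equal, which is the claimed identity.

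I do not expect a genuine obstacle here; the only point requiring care is the bookkeeping between scalar multiplication in the ring and entrywise multiplication, together with the observation that complete multiplicativity is being used on elements of $\intg/m\intg$ (so that no ambiguity arises from choosing integer lifts of the entries). Once those conventions are stated cleanly, the corollary follows immediately from Theorem \ref{RotoSymmetry}.
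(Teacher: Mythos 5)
Your proposal is correct and is exactly the argument the paper intends: the corollary is presented as an immediate consequence of Theorem \ref{RotoSymmetry} via the entrywise definition of $\varphi(A)$ and complete multiplicativity applied to $\varphi(na_{i,j}) = \varphi(n)\varphi(a_{i,j})$ in $\intg/m\intg$. Your extra care about working with ring elements rather than integer lifts is sound but does not change the route.
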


It is worth noting that the expression $\sigma(\varphi(A))$ makes sense for all $\sigma \in D_4(n)$ and all $A\in M_{n\times n}(\intg/m\intg)$, and moreover, $\sigma(\varphi(A)) = \varphi(\sigma(A))$.  As a result, we may interpret 
Corollary \ref{MultiplicativeRotoSymmetry} as a statement about symmetry properties of $\varphi(Q_n)$.  These observations also enable us to deduce information about $\varphi(\sigma(Q_n))$ for other points $\sigma \in D_4(n)$.
For instance, we find that
\begin{equation*}
	\varphi(\rho^2(Q_n)) = \varphi(-1)\varphi(Q_n)\quad\mbox{and}\quad \varphi(\rho^3(Q_n)) = \varphi(-1)\varphi(n)\varphi(Q_n).
\end{equation*}
These assertions could also be concluded directly from Theorem \ref{RotoSymmetry} in conjunction with our above table of values.
	
\section{Applications}

If $p$ is an odd prime and $a\in \intg$ then the {\it Legendre symbol} $\legendre{a}{p}$ is defined so that
\begin{equation*}
	\legendre{a}{p} = \begin{cases} 0 & \mbox{if } a \equiv 0 \mod p \\
							1 &  \mbox{if } a \mbox{ is a perfect square in } (\intg/p\intg)^\times \\
							-1 &  \mbox{if } a \mbox{ is not a perfect square in } (\intg/p\intg)^\times. 
				\end{cases}
\end{equation*}
Certainly $a\mapsto \legendre{a}{p}$ is well-defined on $\intg/p\intg$, and moreover, this map defines a completely multiplicative function.  That is, for all $a,b\in \intg/p\intg$ we have that
\begin{equation*}
	\legendre{ab}{p} = \legendre{a}{p}\legendre{b}{p}.
\end{equation*}
For each odd integer $m>2$, write
\begin{equation*}
	m = \prod_{k=1}^K p_k
\end{equation*}
for its factorization into (not necessarily distinct) primes.  If $a\in \intg$ we define the {\it Jacobi symbol} $\legendre{a}{m}$ by
\begin{equation*}
	\legendre{a}{m} = \prod_{k=1}^K \legendre{a}{p_k}.
\end{equation*}
Like the Legendre symbol that it generalizes, the Jacobi symbol is a completely multiplicative function which is well-defined on $\intg/m\intg$.  For the purposes of applying Corollary \ref{MultiplicativeRotoSymmetry},
we are particularly interested in the case where $m = n^2+1$ for some positive even integer $n$.  If $A$ is a matrix with entries in $\intg/m\intg$ then we shall write $\legendre{A}{m}$ 
for the matrix obtained by applying the Jacobi symbol to each entry.

\begin{thm} \label{Jacobi}
	If $n$ is a positive even integer and $m=n^2+1$ then
	\begin{equation} \label{JacobiEquation}
		\legendre{\rho(Q_n)}{m} = \begin{cases} \legendre{Q_n}{m} & \mbox{if } n\equiv 0\mod 4 \\ -\legendre{Q_n}{m} & \mbox{if } n\equiv 2\mod 4. \end{cases}
	\end{equation}
\end{thm}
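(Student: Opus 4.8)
The plan is to reduce the entire statement to the evaluation of a single Jacobi symbol. Since the Jacobi symbol $\legendre{\cdot}{m}$ is a completely multiplicative function on $\intg/m\intg$, Corollary \ref{MultiplicativeRotoSymmetry} applies verbatim and yields
\begin{equation*}
	\legendre{\rho(Q_n)}{m} = \legendre{n}{m}\legendre{Q_n}{m}.
\end{equation*}
Comparing this with \eqref{JacobiEquation}, the theorem is equivalent to the claim that $\legendre{n}{m} = 1$ when $n \equiv 0 \pmod 4$ and $\legendre{n}{m} = -1$ when $n \equiv 2 \pmod 4$. Thus everything comes down to computing $\legendre{n}{n^2+1}$ for even $n$. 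Note first that $\gcd(n,m) = \gcd(n,n^2+1) = 1$, so this symbol is indeed $\pm 1$.

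Because $n$ is even, I would begin by peeling off its $2$-part: write $n = 2^s t$ with $t$ odd and $s \geq 1$, and use multiplicativity to split
\begin{equation*}
	\legendre{n}{m} = \legendre{2}{m}^{s}\legendre{t}{m}.
\end{equation*}
The factor $\legendre{2}{m}$ is handled by the second supplementary law, which depends only on $m \bmod 8$. Here the hypothesis on $n$ enters: if $n \equiv 0 \pmod 4$ then $n^2 \equiv 0 \pmod{16}$ and so $m \equiv 1 \pmod 8$, giving $\legendre{2}{m} = 1$; if $n \equiv 2 \pmod 4$ then $n^2 \equiv 4 \pmod 8$ and so $m \equiv 5 \pmod 8$, giving $\legendre{2}{m} = -1$ with $s = 1$.

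For the odd factor $\legendre{t}{m}$ I would invoke Jacobi reciprocity. Since $n$ even forces $m = n^2+1 \equiv 1 \pmod 4$, the reciprocity sign $(-1)^{\frac{t-1}{2}\frac{m-1}{2}}$ is trivial, so $\legendre{t}{m} = \legendre{m}{t}$. The key simplification is that $t \mid n$ implies $n^2 \equiv 0 \pmod t$, whence $m = n^2+1 \equiv 1 \pmod t$ and therefore $\legendre{m}{t} = \legendre{1}{t} = 1$. Combining the two factors then gives $\legendre{n}{m} = \legendre{2}{m}^{s}$, which equals $1$ in the case $n \equiv 0 \pmod 4$ and $-1$ in the case $n \equiv 2 \pmod 4$, exactly as required.

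The computation is short, and the only real care needed is bookkeeping: one must track the exponent $s$ against the value of $\legendre{2}{m}$, and one must confirm that the hypotheses of Jacobi reciprocity (both arguments positive, odd, and coprime) genuinely hold, including the degenerate case $t = 1$ where the reciprocity step is vacuous. I expect no serious obstacle here; the substantive idea is simply the congruence $m \equiv 1 \pmod t$, which collapses the reciprocity to a trivial symbol and isolates the entire dependence of $\legendre{n}{m}$ on the residue of $m$ modulo $8$.
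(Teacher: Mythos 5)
Your proof is correct, but it reaches the key fact $\legendre{n}{m}=(-1)^{n^2/4}$ by a genuinely different route than the paper. Both arguments begin identically, reducing the theorem via Corollary \ref{MultiplicativeRotoSymmetry} to evaluating $\legendre{n}{n^2+1}$. The paper then invokes Zolotarev's Lemma: the permutation $a\mapsto na$ of $\intg/m\intg$ is, by Theorem \ref{RotoSymmetry}, precisely the $90^\circ$ rotation of the sequential matrix together with the fixed point $0$, so for even $n$ it decomposes into $n^2/4$ disjoint $4$-cycles and its signature is $(-1)^{n^2/4}$; this re-uses the rotation theorem itself and avoids reciprocity entirely. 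You instead split $n=2^st$ with $t$ odd and compute each factor classically: the second supplementary law gives $\legendre{2}{m}$ from $m\bmod 8$ (correctly $m\equiv 1\pmod 8$ when $4\mid n$ and $m\equiv 5\pmod 8$ when $n\equiv 2\pmod 4$), and Jacobi reciprocity, with trivial sign since $m\equiv 1\pmod 4$, collapses $\legendre{t}{m}$ to $\legendre{m}{t}=\legendre{1}{t}=1$ because $t\mid n$ forces $m\equiv 1\pmod t$. The bookkeeping all checks out, including the exponent $s$ (irrelevant when $\legendre{2}{m}=1$, and equal to $1$ when $n\equiv 2\pmod 4$) and the degenerate case $t=1$. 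What each approach buys: yours is self-contained standard number theory and needs neither Zolotarev nor the matrix structure, while the paper's is arguably more in the spirit of the article, since the geometric orbit structure of the rotation directly exhibits the cycle decomposition and hence the sign.
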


Theorem \ref{Jacobi} generalizes a well-known fact about the symmetry properties of the Jacobi symbol.  Under the assumptions of Theorem \ref{Jacobi}, $-1$ is certainly a square modulo $m$, and therefore, $-1$
is a square modulo every prime which divides $m$.   We conclude that $\legendre{-1}{m} = 1$ and
\begin{equation} \label{BasicSymmetry}
	\legendre{a}{m} = \legendre{-a}{m}\quad\mbox{for all }a\in \intg/m\intg.
\end{equation}
By applying Theorem \ref{Jacobi} twice, we can deduce \eqref{BasicSymmetry} in another way.  Indeed, it follows from Theorem \ref{Jacobi} that 
\begin{equation*}
	\legendre{\rho^2(Q_n)}{m} = \legendre{A}{m},
\end{equation*}
which is equivalent to the assertion that $\legendre{Q_n}{m}$ is centro-symmetric.  Now we immediately obtain \eqref{BasicSymmetry}.  In view of these observations, we may interpret Theorem \ref{Jacobi}
as an improvement to the well-known fact \eqref{BasicSymmetry}.

As a basic example of Theorem \ref{Jacobi}, consider the case where $n=4$ so that $m=17$ and we are in the situation of the 
first line of \eqref{JacobiEquation}.  After applying the Jacobi symbol to the $4\times 4$ sequential matrix, as predicted by Theorem \ref{Jacobi}, we obtain a matrix which is fixed under all rotations.

\begin{equation*}
	\legendre{Q_4}{17} = \begin{pmatrix}
		+1 & +1 & -1 & +1\\
		-1 & -1 & -1 & +1\\
		+1 & -1 & -1 & -1\\
		+1 & -1 & +1 & +1
	\end{pmatrix}
\end{equation*}

Of course, Theorem \ref{Jacobi} also applies in cases where $m$ is not prime or where $n\equiv 2\mod 4$.  For $n=6$ and $n=8$ we obtain the following matrices, respectively.

\begin{equation*}
	\legendre{Q_6}{37} =
		\begin{pmatrix}
 		+1 & -1 & +1 & +1 & -1 & -1 \\
 		+1 & -1 & +1 & +1 & +1 & +1 \\
 		-1 & -1 & -1 & +1 & -1 & -1 \\
 		-1 & -1 & +1 & -1 & -1 & -1 \\
 		+1 & +1 & +1 & +1 & -1 & +1 \\
 		-1 & -1 & +1 & +1 & -1 & +1 \\
		\end{pmatrix}
\end{equation*}

\begin{equation*}
	\legendre{Q_8}{65} =
		\begin{pmatrix}
		 +1 & +1 & -1 & +1 & 0 & -1 & +1 & +1 \\
		 +1 & 0 & -1 & -1 & 0 & +1 & 0 & +1 \\
 		-1 & +1 & -1 & 0 & -1 & -1 & -1 & -1 \\
 		0 & 0 & -1 & +1 & +1 & 0 & -1 & +1 \\
 		+1 & -1 & 0 & +1 & +1 & -1 & 0 & 0 \\
 		-1 & -1 & -1 & -1 & 0 & -1 & +1 & -1 \\
 		+1 & 0 & +1 & 0 & -1 & -1 & 0 & +1 \\
 		+1 & +1 & -1 & 0 & +1 & -1 & +1 & +1 \\
		\end{pmatrix}
\end{equation*}

In the case of $Q_6$, we observe that the matrix is not preserved under a $90^\circ$ clockwise rotation, but rather the sign is flipped after performing a $90^\circ$ clockwise rotation.
This is exactly as predicted by Theorem \ref{Jacobi}.
In the case of $Q_8$, we note that some entries are equal to $0$ due to the fact that the corresponding elements of $\intg/65\intg$ are not relatively prime to $65$.

\section{Proofs}

We begin with our proof of Theorem \ref{RotoSymmetry} concerning the behavior of rotations of sequential matrices.

\begin{proof}[Proof of Theorem \ref{RotoSymmetry}]
	Suppose that $Q_n = (a_{i,j})$ so we know that $\rho(Q_n) = (a_{j,n-i+1})$.  However, we have assumed that $Q_n$ is the $n\times n$ sequential matrix which means that $a_{i,j} = j + (i-1)n$.  Now we conclude that
	\begin{equation*}
		a_{j,n-i+1} = n-i+1 + (j-1)n = -i+1 + jn = jn + (i-1)(-1).
	\end{equation*}
	We also know that $n^2 \equiv -1 \mod m$, so in $\intg/m\intg$ we conclude that
	\begin{equation*}
		a_{j,n-i+1} = jn + (i-1)n^2 = n(j + (i-1)n) = na_{i,j}
	\end{equation*}
	completing the proof.
\end{proof}

The proof of Theorem \ref{Jacobi} is based on an application of Zolotarev's Lemma.

\begin{lem}\label{JacobiLemma}
	If $n$ is a positive even integer and $m = n^2+1$ then
	\begin{equation*}
		\legendre{n}{m} = (-1)^{n^2/4}. % = \begin{cases} 1 & \mbox{if } n \equiv 0 \mod 4 \\ -1 & \mbox{if } n\equiv 2\mod 4.\end{cases}
	\end{equation*}
\end{lem}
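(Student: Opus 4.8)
The plan is to invoke Zolotarev's Lemma in the form valid for the Jacobi symbol: for any odd modulus $m$ and any integer $a$ with $\gcd(a,m)=1$, the value $\legendre{a}{m}$ equals the sign of the permutation $\pi_a$ of $\intg/m\intg$ given by $\pi_a(x) = ax$. Here I take $a = n$ and $m = n^2+1$, and since $\gcd(n,n^2+1) = 1$ the hypothesis holds; thus it suffices to compute $\sgn(\pi_n)$. The whole point of the argument is that this sign can be read off geometrically from Theorem \ref{RotoSymmetry}.

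The key step is to recognize $\pi_n$ as the rotation underlying Theorem \ref{RotoSymmetry}. The entries of $Q_n$ are exactly $1,2,\dots,n^2$, a complete set of nonzero residues modulo $m$; since $\pi_n$ fixes $0$, its sign is unchanged by restricting it to these residues. Writing $Q_n = (a_{i,j})$, Theorem \ref{RotoSymmetry} gives $a_{j,n-i+1} = n\,a_{i,j}$ in $\intg/m\intg$, that is, $\pi_n(a_{i,j}) = a_{j,n-i+1}$. Hence, under the bijection $\beta$ that sends a value $v$ to the position $(i,j)$ with $a_{i,j} = v$, one checks $\beta\circ\pi_n = r\circ\beta$, where $r\colon (i,j)\mapsto (j,n-i+1)$ is the $90^\circ$ clockwise rotation of the $n\times n$ array of positions. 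Therefore $\pi_n = \beta^{-1} r \beta$ is conjugate to $r$, and conjugate permutations share the same sign, so $\sgn(\pi_n) = \sgn(r)$.

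Finally I would compute $\sgn(r)$ from the orbit structure of the rotation. Because $n$ is even, the $n\times n$ grid has no central cell, so no position is fixed by $r$ or by $r^2$; every orbit of $\langle r\rangle$ consequently has size $4$, and the $n^2$ positions split into exactly $n^2/4$ four-cycles. A four-cycle is an odd permutation, so $\sgn(r) = (-1)^{n^2/4}$. Combining this with the previous paragraph gives $\legendre{n}{m} = \sgn(\pi_n) = (-1)^{n^2/4}$, as claimed.

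The cycle count and the observation $\gcd(n,m)=1$ are routine. The step demanding the most care is the conjugacy identification: one must verify that Theorem \ref{RotoSymmetry} genuinely translates multiplication by $n$ on residues into the position rotation $r$, and that moving between the permutation of values and the permutation of positions is an honest conjugation, hence sign-preserving. The parity hypothesis on $n$ is also essential and slightly subtle, since it is precisely what rules out a fixed central cell and forces every orbit to have size $4$; this both makes $n^2/4$ an integer and produces the clean exponent in the statement. For odd $n$ a central fixed point would appear and the analysis would change.
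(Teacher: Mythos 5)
Your proof is correct and takes essentially the same route as the paper's: both apply Zolotarev's Lemma to the permutation $a\mapsto na$, use Theorem \ref{RotoSymmetry} to identify it with the $90^\circ$ rotation of $Q_n$, and count $n^2/4$ four-cycles (the evenness of $n$ ruling out shorter orbits). Your explicit conjugation between the permutation of values and the permutation of positions is a slightly more careful packaging of the same idea.
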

\begin{proof}
	Define the map $f:\intg/m\intg\to \intg/m\intg$ by $f(a) = na$, and since $n$ is relatively prime to $m$, we note that $f\in S_m$.  Then according to Zolotarev's Lemma \cite{Zolotarev},
	we have
	\begin{equation} \label{Zolotarev}
		\legendre{n}{m} = \sgn(f),
	\end{equation}
	where $\sgn(f)$ denotes the signature of $f$, i.e., $\sgn(f) = 1$ if $f$ is an even permutation and $\sgn(f) = -1$ if $f$ is an odd permutation.
	
	By Theorem \ref{RotoSymmetry}, applying $f$ to each entry of $Q_n$ performs a rotation of $Q_n$ clockwise by $90^\circ$.
	Therefore, every non-zero element $a\in \intg/m\intg$ satisfies $f^4(a) = a$, and since $n$ is even, $f^k(a)\ne a$ for all $0 < k < 4$.  In view of these observations, when we represent $f$ as a product of disjoint
	cycles, $f$ is a product of $n^2/4$ cycles of length four and $(0)$.  We know that $\sgn((0)) = 1$, and for every cycle $g$ of length four, we have that $\sgn(g) = -1$.  It now follows that
	$\sgn(f) = (-1)^{n^2/4}$ and we conclude the lemma from \eqref{Zolotarev}.
\end{proof}

We now obtain Theorem \ref{Jacobi} by simply applying Corollary \ref{MultiplicativeRotoSymmetry} with the Jacobi symbol in place of $\varphi$.

\begin{proof}[Proof of Theorem \ref{Jacobi}]
	By applying Corollary \ref{MultiplicativeRotoSymmetry} with the Jacobi symbol in place of $\varphi$ and utilizing Lemma \ref{JacobiLemma}, we immediately obtain that
	\begin{equation} \label{PreliminaryRotation}
		\legendre{\rho(Q_n)}{m} = \legendre{n}{m}\legendre{Q_n}{m} = (-1)^{n^2/4} \legendre{Q_n}{m}.
	\end{equation}
	If $n\equiv 0 \mod 4$ then $n^2/4$ is even and $(-1)^{n^2/4} =1$.  Otherwise, $n\equiv 2\mod 4$ and $n$ has the form $n = 4k+2$ for some $k\in \intg$.  This observation yields
	\begin{equation*}
		\frac{n^2}{4} = \frac{(4k+2)^2}{4} = \frac{16k^2 + 16k + 4}{4} = 4k^2 + 4k + 1
	\end{equation*}
	which is certainly odd meaning that $(-1)^{n^2/4} =-1$.  The result now follows from \eqref{PreliminaryRotation}.
\end{proof}

\bibliographystyle{abbrv}
\bibliography{LegendreMatrices-8-14-18}

\begin{thebibliography}{1}

\bibitem{AbuJeib}
I.~T. Abu-Jeib.
\newblock Centrosymmetric matrices: properties and an alternative approach.
\newblock {\em Can. Appl. Math. Q.}, 10(4):429--445, 2002.

\bibitem{AbuJeib2}
I.~T. Abu-Jeib.
\newblock Centrosymmetric and skew-centrosymmetric matrices and regular magic
  squares.
\newblock {\em New Zealand J. Math.}, 33(2):105--112, 2004.

\bibitem{BrualdiMa}
R.~A. Brualdi and S.-M. Ma.
\newblock Centrosymmetric, symmetric and {H}ankel-symmetric matrices.
\newblock In {\em Mathematics across contemporary sciences}, volume 190 of {\em
  Springer Proc. Math. Stat.}, pages 17--31. Springer, Cham, 2017.

\bibitem{YSK}
F.~Yilmaz, T.~Sogabe, and E.~Kirklar.
\newblock On the {P}faffians and determinants of some skew-centrosymmetric
  matrices.
\newblock {\em J. Integer Seq.}, 20(4):Art. 17.4.6, 9, 2017.

\bibitem{Zolotarev}
G.~Zolotarev.
\newblock Nouvelle d\'emonstration de la loi de r\'eciprocit\'e de legendre.
\newblock {\em Nouv. Ann. Math}, 11(2):354--362, 1872.

\end{thebibliography}

\end{document}